\newtheorem{theorem}{Theorem}
\newtheorem{proposition}[theorem]{Proposition}
\newtheorem{lemma}[theorem]{Lemma}
\newtheorem{corollary}[theorem]{Corollary}
\theoremstyle{definition}
\newtheorem{defn}[theorem]{Definition}
\crefname{defn}{Definition}{Definitions}
\theoremstyle{remark}
\newtheorem*{remark}{Remark}
\newcommand{\on}{\operatorname}
\newcommand{\mb}{\mathbb}
\newcommand{\Spec}{\on{Spec}}
\newcommand{\wt}{\widetilde}
\newcommand{\BA}{\mathbb{A}}
\newcommand{\defeq}{\vcentcolon=}
\renewcommand{\cong}{\simeq}
\title{On the EKL-Degree of a Weyl Cover}
\author{Joseph Knight, Ashvin A.~Swaminathan, and Dennis Tseng}
\date{\today}
  \noindent\textsc{Department of Mathematics, Princeton University, \mbox{Princeton, NJ 08544}} \par
  \noindent\textsc{Department of Mathematics, Harvard University, \mbox{Cambridge, MA 02138}} \par
\subjclass[2010]{14M15, 55M25 (primary), and 14F42, 14G27 (secondary)}
\begin{document}

\maketitle

\vspace*{-0.3in}
\begin{abstract}
    More than four decades ago, Eisenbud, Khim\v{s}ia\v{s}vili, and Levine introduced an analogue in the algebro-geometric setting of the notion of local degree from differential topology. Their notion of degree, which we call the EKL-degree, can be thought of as a refinement of the usual notion of local degree in algebraic geometry that works over non-algebraically closed base fields, taking values in the Grothendieck-Witt ring.
    In this note, we compute the EKL-degree at the origin of certain finite covers $f\colon \mb{A}^n\to \mb{A}^n$ induced by quotients under actions of Weyl groups. We use knowledge of the cohomology ring of partial flag varieties as a key input in our proofs, and our computations give interesting explicit examples in the field of $\BA^1$-enumerative geometry.
\end{abstract}

\section{Introduction} \label{sec-intro}

We work over a field $K$, which is arbitrary with characteristic not equal to 2 unless stated otherwise. Associated to a finite morphism $f\colon \BA^n\to \BA^n$ of $K$-varieties, we have the usual notion of its degree at the origin of the source, denoted by $\deg_0 f$. Refining this notion, Eisenbud, Khim\v{s}ia\v{s}vili, and Levine (see~ \cite{EL77,H77}) introduced a new notion of degree, namely the EKL-degree, which we denote by $\deg_0^{\on{EKL}} f$ and which is an element of the Grothendieck-Witt ring $\on{GW}(K)$.\footnote{We give precise definitions of $\on{GW}(K)$ and $\deg_0^{\on{EKL}}f$ in Section~\ref{sec-back}.} 

If $K$ is algebraically closed, then the rank homomorphism defines an isomorphism of rings $\on{GW}(K) \overset{\sim}\longrightarrow \mb{Z}$, and $\deg_0^{\on{EKL}}f$ coincides with $\deg_0 f$. However, if $K=\mb{R}$, then the rank homomorphism $\on{GW}(\mb{R})\to \mb{Z}$ has kernel isomorphic to $\mathbb{Z}$, reflecting the fact that $\deg_0^{\on{EKL}}f$ also contains the data of the Brouwer degree of the underlying map of $\mb{R}$-manifolds. In general, $\deg_0^{\on{EKL}}f$ can be viewed as an enrichment of $\deg_0 f$ that contains interesting arithmetic data.

In this paper, we compute EKL-degrees at the origin of maps $\BA^n \to \BA^n$ induced by actions of Weyl groups on root spaces. There are at least two motivations for performing this computation:
\begin{enumerate}
    \item The work of Eisenbud, Khim\v{s}ia\v{s}vili, and Levine from over four decades ago has experienced something of a revival in recent years through the field of $\BA^1$-enumerative geometry. By a result of Kass and Wickelgren \cite{KW19}, the algebraic notion of EKL-degree coincides with the local $\BA^1$-Brouwer-degree, which is a topological notion of degree that is of central importance to the field of $\BA^1$-enumerative geometry. Since the EKL-degree lends itself more readily to computation, their result provides a way of explicitly computing the local $\BA^1$-Brouwer-degree. In this regard, our computation constitutes an explicit example of a degree computation in $\BA^1$-enumerative geometry.
    \item In all of the maps $\BA^n \to \BA^n$ that we consider, the preimage of the origin is a finite scheme supported at the origin. Thus, it is natural to expect that the local degree that we compute in this paper should agree with a suitable notion of (global) $\BA^1$-Brouwer-degree, if such a notion were to be discovered. 
\end{enumerate}

As a first example, consider the quotient map $\pi\colon \BA^n\to \BA^n/S_n\simeq \BA^n$ of affine space by the action of the symmetric group on the coordinates. The usual degree of $\pi$ is $\deg_0\pi = n!$, and it turns out that $\deg_0^{\on{EKL}} \pi=\frac{n!}{2} \cdot (\langle 1\rangle +\langle -1\rangle)$ for $n\geq 2$. This follows easily from the fact that $S_n$ contains a simple reflection, leading us to the following preliminary observation.

\begin{proposition}
\label{trivialthm}
Let $G$ be a finite group acting linearly on a finite-dimensional $K$-vector space $V$. If the ring $K[V]^G$ of $G$-invariants of $K[V]$ is a polynomial ring and $G$ contains a simple reflection, then the EKL-degree of  $\pi \colon \on{Spec} K[V] \to \on{Spec} K[V]^G$ is given by
\begin{align*}
\deg_0^{\on{EKL}} \pi = \frac{\deg_0\pi}{2} \cdot  (\langle 1\rangle +\langle -1\rangle).
\end{align*}
\end{proposition}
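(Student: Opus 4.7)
The plan is to factor the quotient map $\pi$ through the quotient by the cyclic subgroup generated by a single simple reflection and then invoke multiplicativity of the EKL-degree, combined with the absorbing property of the hyperbolic form $\mathbb{H} \defeq \langle 1\rangle + \langle -1\rangle$ in $\on{GW}(K)$ (namely that $\mathbb{H}\cdot q = \dim(q)\cdot \mathbb{H}$ for any $q\in \on{GW}(K)$).

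\medskip

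\textbf{Step 1 (setup and factorization).} Let $s\in G$ be a simple reflection. Choose coordinates $x_1,\dots,x_n$ on $V$ in which $s$ acts by $x_1\mapsto -x_1$ and $x_i\mapsto x_i$ for $i\geq 2$. Then
\[
K[V]^{\langle s\rangle}=K[x_1^2,x_2,\dots,x_n],
\]
which is again a polynomial ring, so the quotient $\pi_s\colon \on{Spec} K[V]\to \on{Spec} K[V]^{\langle s\rangle}$ can be identified (in suitable target coordinates) with the map $(x_1,\dots,x_n)\mapsto (x_1^2,x_2,\dots,x_n)$. Since $K[V]^G\subseteq K[V]^{\langle s\rangle}$ and both are polynomial rings by assumption, the inclusion induces a finite map $\pi'\colon \BA^n\to \BA^n$ with $\pi=\pi'\circ\pi_s$. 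Because $\pi$ and $\pi_s$ each have an isolated zero at the origin, so does $\pi'$, and the degrees multiply: $\deg_0\pi=\deg_0\pi'\cdot \deg_0\pi_s = 2\deg_0\pi'$.

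\medskip

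\textbf{Step 2 (multiplicativity).} I would next invoke the multiplicativity of the EKL-degree under composition, which is a standard property of Scheja-Storch forms (and is also a consequence of functoriality of the local $\BA^1$-Brouwer degree via the Kass-Wickelgren identification). Applied to $\pi = \pi'\circ \pi_s$ at the origin, this gives
\[
\deg_0^{\on{EKL}}\pi \;=\; \deg_0^{\on{EKL}}\pi'\cdot \deg_0^{\on{EKL}}\pi_s \quad \text{in }\on{GW}(K).
\]

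\medskip

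\textbf{Step 3 (computing the reflection factor).} I would next verify $\deg_0^{\on{EKL}}\pi_s=\mathbb{H}$. The local algebra at the origin for $\pi_s$ is $K[x_1]/(x_1^2)$ (the extra coordinates $x_2,\dots,x_n$ contribute trivially), with $K$-basis $\{1,x_1\}$. A direct application of the Scheja-Storch construction, using the Bezoutian $x_1+y_1$ of $x_1^2-y_1^2$, yields the bilinear form with Gram matrix $\bigl(\begin{smallmatrix} 0 & 1 \\ 1 & 0\end{smallmatrix}\bigr)$, which represents $\mathbb{H}$.

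\medskip

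\textbf{Step 4 (conclusion).} Combining Steps 2 and 3 gives $\deg_0^{\on{EKL}}\pi = \deg_0^{\on{EKL}}\pi'\cdot \mathbb{H}$. Using $\mathbb{H}\cdot q = \dim(q)\cdot \mathbb{H}$ in $\on{GW}(K)$, and observing that $\dim(\deg_0^{\on{EKL}}\pi')=\deg_0\pi'=\tfrac{1}{2}\deg_0\pi$, I conclude
\[
\deg_0^{\on{EKL}}\pi \;=\; \tfrac{\deg_0\pi}{2}\cdot \mathbb{H} \;=\; \tfrac{\deg_0\pi}{2}\cdot(\langle 1\rangle+\langle -1\rangle).
\]

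The main obstacle is really just ensuring that the multiplicativity of the EKL-degree is applied cleanly: one needs both $\pi_s$ and $\pi'$ to have isolated zeros at the origin (which we have verified), and the multiplicativity statement used should be the local form over $\on{GW}(K)$. Everything else is a routine coordinate change plus the absorbing identity $\mathbb{H}\cdot q=\dim(q)\mathbb{H}$.
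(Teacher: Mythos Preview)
Your proposal is correct and follows essentially the same route as the paper: factor $\pi$ through the quotient by the single simple reflection, verify that this first factor has EKL-degree $\langle 1\rangle+\langle -1\rangle$, invoke multiplicativity of the EKL-degree under composition, and then use the absorbing property of the hyperbolic form in $\on{GW}(K)$. The only cosmetic difference is that the paper phrases the last step as ``the product is some integer multiple of $\langle 1\rangle+\langle -1\rangle$, and the multiple is pinned down by comparing ranks,'' whereas you apply the identity $(\langle 1\rangle+\langle -1\rangle)\cdot q=\rank(q)\cdot(\langle 1\rangle+\langle -1\rangle)$ directly; these are equivalent.
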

For instance, \Cref{trivialthm} applies to quotients of root spaces by Weyl groups when $K$ is of characteristic zero by the Chevalley–Shephard–Todd theorem (see~\cite[(A)]{C55}) or in arbitrary characteristic when the Weyl group is of type $A$ or $C$ (see~\cite[Th\'{e}or\`{e}me]{D73}).

We can also compute EKL-degrees in situations where \Cref{trivialthm} does not apply. For example, we will show that the EKL-degree of the quotient map $\BA^4/(S_2 \times S_2) \to \BA^4/S_4$ is given by $4\cdot \langle 1\rangle + 2\cdot \langle -1\rangle$, so in particular, the EKL-degree is no longer a multiple of $\langle 1\rangle +\langle -1\rangle$. Generalizing this example, we prove the following:
\begin{theorem}
\label{partialquotient}
Let $n_1,\ldots,n_r$ be positive integers satisfying $n = \sum_{i = 1}^r n_i$. The EKL-degree of the map $\pi \colon \mathbb{A}_K^n\big/\prod_{i=1}^{r}S_{n_i}\to \mathbb{A}_K^{n}/S_n$
is given by
\begin{align*}
\deg_0^{\on{EKL}} \pi & = \frac{\deg_0\pi - a}{2} \cdot  (\langle 1\rangle +\langle -1\rangle) + a\cdot \langle 1 \rangle \\
& = \frac{1}{2}\left(\frac{n!}{\prod_{i=1}^{r}n_i!}+a\right)\cdot \langle 1\rangle + \frac{1}{2}\left(\frac{n!}{\prod_{i=1}^{r}n_i!}-a\right)\cdot \langle -1\rangle,
\end{align*}
where $a = \lfloor\frac{n}{2}\rfloor! \big/ \prod_{i=1}^{r}\lfloor\frac{n_i}{2}\rfloor!$ if at most one $n_i$ is odd and $a =
    0$ otherwise.
\end{theorem}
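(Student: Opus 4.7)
The plan is to identify the local algebra of $\pi$ at the origin with the cohomology ring of a partial flag variety, compute the Scheja--Storch bilinear form via Schubert calculus, and conclude by a combinatorial count of self-dual Schubert classes.

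First, I would observe that $K[x_1, \ldots, x_n]^{\prod S_{n_i}}$ is a polynomial ring generated by the elementary symmetric polynomials in each block of variables, and that the pullback of the maximal ideal at the origin of $\mb{A}^n/S_n$ is generated by the elementary symmetric polynomials $e_1, \ldots, e_n$ in all $n$ variables. Thus the local algebra at the origin is
\[
Q \;=\; K[x_1, \ldots, x_n]^{\prod S_{n_i}} \big/ (e_1, \ldots, e_n),
\]
which by Borel's presentation is the cohomology ring $H^\ast(\on{Fl}(n_1, \ldots, n_r); K)$ of the partial flag variety of type $(n_1, \ldots, n_r)$. This is a Gorenstein $K$-algebra of dimension $n! / \prod n_i! = \deg_0 \pi$, with one-dimensional socle in top degree. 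I would then identify the Scheja--Storch trace form on $Q$ with the Poincar\'e (intersection) pairing on $H^\ast(\on{Fl}(n_1, \ldots, n_r))$ relative to a fixed socle generator, which is the bridge that allows Schubert calculus to enter the picture.

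Next, decompose $Q$ in the Schubert basis $\{\sigma_u : u \in W^P\}$, where $W^P$ is the set of minimal coset representatives for $S_n/\prod S_{n_i}$. The Poincar\'e pairing satisfies $\sigma_u \cdot \sigma_v = \delta_{v, u^\vee} \cdot \sigma_{\mathrm{top}}$, where $u \mapsto u^\vee$ is the Poincar\'e-dual involution on $W^P$. Consequently, in the Schubert basis the bilinear form is an orthogonal direct sum of hyperbolic planes $\langle 1 \rangle + \langle -1 \rangle$, one for each unordered pair $\{u, u^\vee\}$ with $u \neq u^\vee$, plus a $1$-dimensional piece $\langle 1 \rangle$ for each fixed point $u = u^\vee$; the self-pairing is $1$ because for self-dual $u$, the product $\sigma_u^2$ lies in pure top degree with coefficient $c_{u,u}^{\mathrm{top}} = \delta_{u, u^\vee} = 1$. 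This gives
\[
\deg_0^{\on{EKL}}(\pi) \;=\; a \cdot \langle 1 \rangle \;+\; \tfrac{1}{2}(\deg_0 \pi - a)(\langle 1 \rangle + \langle -1 \rangle),
\]
where $a$ counts the self-dual Schubert classes.

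Finally, I would count $a$ combinatorially. Schubert classes in $\on{Fl}(n_1, \ldots, n_r)$ correspond to ordered set partitions $(S_1, \ldots, S_r)$ of $\{1, \ldots, n\}$ with $|S_i| = n_i$, and the Poincar\'e-dual involution acts componentwise as $S_i \mapsto \{n+1-s : s \in S_i\}$. So a self-dual class is precisely one in which every $S_i$ is invariant under the involution $s \mapsto n+1-s$ of $\{1, \ldots, n\}$. When $n$ is even this involution has no fixed points and every invariant $S_i$ is a union of two-element orbits, forcing every $n_i$ to be even; when $n$ is odd the unique fixed point $(n+1)/2$ lies in exactly one $S_j$, forcing exactly one $n_j$ to be odd. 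If more than one $n_i$ is odd there are no self-dual classes and $a = 0$, while in the remaining cases restriction to $\{1, \ldots, \lfloor n/2 \rfloor\}$ gives a bijection with ordered set partitions of this smaller set having part sizes $\lfloor n_i/2 \rfloor$, yielding $a = \lfloor n/2 \rfloor! / \prod \lfloor n_i/2 \rfloor!$ as claimed. The main technical obstacle is the identification of the Scheja--Storch form with the Poincar\'e pairing on $H^\ast(\on{Fl}(n_1, \ldots, n_r))$ with the correct normalization of the socle generator, so that self-pairings of self-dual Schubert classes evaluate to $1$ rather than some other square class; once this normalization is in place, the Schubert-calculus splitting of the form and the combinatorial count are essentially formal.
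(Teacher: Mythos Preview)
Your proposal is correct and follows essentially the same route as the paper: identify $Q$ with $H^\bullet(\mathrm{Fl}(n_1,\dots,n_r);K)$, diagonalize the form in the Schubert basis into hyperbolic planes plus rank-one pieces from self-dual classes, and count the latter via the involution $s\mapsto n+1-s$ on ordered set partitions. The normalization issue you flag is exactly where the paper expends its effort: it shows the distinguished socle element equals the class of a point by observing that its integral lift is nonzero modulo every prime (hence generates $H^{\mathrm{top}}(F,\mathbb{Z})\simeq\mathbb{Z}$), and then pins down the sign by computing the Jacobian element explicitly as a product of differences and matching it against the Schubert polynomial for the point class via the block-decomposition formula of Billey--Jockusch--Stanley.
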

The proof of~\Cref{partialquotient} involves applying the Definition of the EKL-degree together with knowledge of the cohomology ring of partial flag varieties of type $A$. Motivated by this, we extend~\Cref{partialquotient} to apply to Weyl groups of other types as follows:

\begin{theorem} \label{bigthm}
Let $K$ be a field of characteristic $0$. Let $G$ be a simple complex Lie group with root space $V/K$, and let $P \subset G$ be a parabolic subgroup. Let $W$ be the Weyl group of $G$, and let $W_P \subset W$ be the associated parabolic subgroup. Then the EKL-degree of the map $\pi \colon \Spec K[V]^{W_P} \to \Spec K[V]^W$ \mbox{is given by}
$$\deg_0^{\on{EKL}} \pi = \frac{\deg_0\pi - a_P}{2} \cdot (\langle 1 \rangle + \langle -1 \rangle) + a_P \cdot \langle \alpha \rangle,$$
where $\alpha \in K^\times$, and $a_P$ is equal to the number of cosets $\omega W_P \in W/W_P$ for which $\omega^{-1}\omega_0\omega \in W_P$, where $\omega_0 \in W$ is the longest word.
\end{theorem}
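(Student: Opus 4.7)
The plan is to reduce the EKL-degree computation to a question in Schubert calculus on $G/P$. The key input is Borel's presentation: for $K$ of characteristic zero, the positive-degree $W$-invariants in $K[V]$ form a regular sequence, so $\pi$ is a finite flat complete intersection, and the quotient algebra
\[
Q \defeq K[V]^{W_P} \big/ \bigl(K[V]^W_+\bigr)\cdot K[V]^{W_P}
\]
is isomorphic as a graded $K$-algebra to the cohomology ring $H^*(G/P;K)$. This is precisely the zero-dimensional local algebra entering the Scheja--Storch description of $\deg_0^{\on{EKL}}\pi$, and the Schubert classes $\{\sigma_{\omega W_P}\}$ indexed by minimal-length representatives of $W/W_P$ give a preferred $K$-basis.

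Next I would identify the EKL bilinear form $\beta$ on $Q$ with a scalar multiple of the Poincar\'e duality pairing on $H^*(G/P;K)$. The socle of $Q$ is the one-dimensional top-degree piece, generated by the class of a point; the Scheja--Storch trace $\eta\colon Q\to K$ is, up to a single nonzero scalar determined by the distinguished socle generator attached to $\pi$, the functional ``coefficient of $[\mathrm{pt}]$.'' Consequently $\beta(a,b) = \eta(a\cdot b)$ agrees, up to global rescaling, with $(a,b)\mapsto \int_{G/P} a\cup b$.

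In the Schubert basis, this pairing is governed by the involution $\iota\colon W/W_P \to W/W_P$ sending $\omega W_P$ to $\omega_0\omega W_P$: the Poincar\'e dual of $\sigma_{\omega W_P}$ is, up to a nonzero scalar, $\sigma_{\iota(\omega W_P)}$, so the Gram matrix of $\beta$ in the Schubert basis is a generalized permutation matrix realizing $\iota$. A coset is fixed by $\iota$ precisely when $\omega^{-1}\omega_0\omega \in W_P$, and there are exactly $a_P$ of these by definition.

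Finally I would diagonalize. Each size-two orbit of $\iota$ yields a $2\times 2$ Gram block of shape $\bigl(\begin{smallmatrix}0 & d\\ d & 0\end{smallmatrix}\bigr)$, which represents the hyperbolic form $\langle 1\rangle + \langle -1\rangle$ in $\on{GW}(K)$; jointly these blocks contribute $\tfrac{\deg_0\pi - a_P}{2}(\langle 1\rangle + \langle -1\rangle)$. Each of the $a_P$ fixed cosets contributes a diagonal entry $\langle c_i\rangle$, and the main obstacle is to show that all of the $c_i$ agree modulo squares in $K^\times$, so that these terms collapse into $a_P\cdot \langle \alpha\rangle$ for a single $\alpha$. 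I expect this to follow from a symmetry argument on the Schubert basis -- for instance by expressing each $c_i$ as a self-intersection number $\int_{G/P}\sigma_{\omega W_P}^2$ and using the $W$-equivariance of the construction to show that these self-intersections differ only by squares, or alternatively by exploiting that the longest-word involution on $W/W_P$ intertwines the fixed cosets with one another via explicit Weyl-group elements.
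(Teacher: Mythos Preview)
Your overall strategy matches the paper's exactly: identify $Q$ with $H^\bullet(G/P;K)$ via Borel, take $\phi$ to be a scalar multiple of the integration map, and read off the Gram matrix in the Schubert basis using the involution $\omega W_P\mapsto \omega_0\omega W_P$. The only discrepancy is at the very end, where you have manufactured an obstacle that does not exist.

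You write that the Poincar\'e dual of $\sigma_{\omega W_P}$ is $\sigma_{\omega_0\omega W_P}$ ``up to a nonzero scalar,'' so that the Gram matrix is a \emph{generalized} permutation matrix, and then worry about showing the diagonal entries $c_i$ on the $\iota$-fixed cosets all agree modulo squares. But the standard Schubert duality statement is stronger than this: $\{\sigma_{\omega W_P}\}$ and $\{\sigma_{\omega_0\omega W_P}\}$ are \emph{exact} dual bases under the integration pairing, i.e.\ $\int_{G/P}\sigma_{\omega W_P}\cup\sigma_{\omega_0\omega' W_P}=\delta_{\omega W_P,\,\omega' W_P}$ (this is the fact the paper cites from \cite[Section~2.1]{B05}, and it already holds over $\mathbb{Z}$). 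Hence the Gram matrix of the integration pairing in the Schubert basis is an honest permutation matrix with all nonzero entries equal to $1$. After rescaling by the single scalar $\alpha$ needed to send the distinguished socle element to $1$, every self-dual block is exactly $(\alpha)$ and every $2\times 2$ block is exactly $\bigl(\begin{smallmatrix}0&\alpha\\ \alpha&0\end{smallmatrix}\bigr)\sim\langle 1\rangle+\langle-1\rangle$. No separate argument comparing the $c_i$ is needed; your proposed ``symmetry argument'' via $W$-equivariance or self-intersection numbers is both vague and unnecessary.
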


The element $\alpha$ in the statement of Theorem~\ref{bigthm} depends on the choice of identifications of $\on{Spec}(K[V]^W)$ and $\on{Spec}(K[V]^{W_P})$ with $\mb{A}^{\dim(V)}$. Such identifications are equivalent to choosing generators of $\on{Spec}(K[V]^W)$ and $\on{Spec}(K[V]^{W_P})$ as polynomial rings over $K$. In particular, scaling a generator of $\on{Spec}(K[V]^W)$ by $\alpha'$ scales $\deg_0^{\on{EKL}} \pi$ by $(\alpha')^{-1}$, so there is always a choice of generators making $\alpha$ in \Cref{bigthm} equal to 1.

 In the type-$A$ case (i.e.,~\Cref{partialquotient}), we show that taking the obvious choice of generators using elementary symmetric functions yields $\alpha=1$.
On the other hand, the number $a_P$ in the statement of~\Cref{bigthm} can be computed explicitly in all cases, as we demonstrate in the following result:
\begin{proposition}\label{cor-0groups}
We have $a_P = 0$ in Theorem~\ref{bigthm} except in the following cases, tabulated according to the Dynkin diagrams of $W$ and the parabolic subgroup $W_P$:
\begin{center}
    \begin{tabular}{|c|c|c|}
    \hline
      $W$ & $W_P$ & $a_P$ \\ \hline
      $A_n$ & $\amalg_{i = 1}^r A_{n_i}$ with $n = \sum_{i = 1}^r n_i$ and $\#\{\text{odd }n_i\} \leq 1$ & $\lfloor\frac{n}{2}\rfloor! \big/ \prod_{i=1}^{r}\lfloor\frac{n_i}{2}\rfloor!$\\
      $D_{2n+1}$ & $D_{2n}$ & 2 \\
        $E_6$ & $D_5$ & 3 \\
        $E_6$ & $D_4$ & 6 \\
        \hline
    \end{tabular}
\end{center}
\end{proposition}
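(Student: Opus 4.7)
The key observation is that $\omega^{-1}\omega_0\omega \in W_P$ is equivalent to $\omega_0 \cdot \omega W_P = \omega W_P$, so $a_P$ counts the fixed points of $\omega_0$ acting on $W/W_P$ by left multiplication. The plan is a case analysis on the type of $W$, the central dichotomy being whether $\omega_0 = -\mathrm{id}_V$.

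For types $B_n$, $C_n$, $D_{2n}$, $E_7$, $E_8$, $F_4$, $G_2$, we have $\omega_0 = -\mathrm{id}_V$ central, so the condition becomes $\omega_0 \in W_P$. But any proper $W_P$ fixes pointwise the nonzero orthogonal complement of $\mathrm{span}(S_P)$, while $-\mathrm{id}_V$ has trivial fixed subspace; hence $\omega_0 \notin W_P$ and $a_P = 0$.

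For types $A$, $D_{2n+1}$, $E_6$, we have $\omega_0 = -\sigma$ with $\sigma$ the nontrivial order-$2$ diagram automorphism. Using $V^{W_P} \subseteq V^w$ for $w \in W_P$, a necessary condition for $\omega^{-1}\omega_0\omega \in W_P$ is $V^{W_P} \subseteq \omega^{-1} V^{\omega_0}$, giving the bound $\dim V^{W_P} \leq \dim V^{\omega_0}$. For type $A$ with $W = S_n$ and $W_P = \prod S_{n_i}$, identify $W/W_P$ with ordered set partitions of $\{1,\ldots,n\}$ of type $(n_1,\ldots,n_r)$; $\omega_0$ is the reverse involution $i \mapsto n{+}1{-}i$, so a coset is fixed iff each block is a union of $\omega_0$-orbits, i.e., pairs $\{i, n{+}1{-}i\}$ together with at most one occurrence of the $\omega_0$-fixed point (which exists iff $n$ is odd). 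Since the fixed point sits in a block of odd size, the count is zero when two or more $n_i$ are odd; otherwise, distributing the $\lfloor n/2 \rfloor$ pairs among blocks gives $\lfloor n/2 \rfloor! / \prod\lfloor n_i/2 \rfloor!$. For type $D_{2n+1}$, realize $W$ as signed permutations of $\{e_i\}_{i=1}^{2n+1}$ with an even number of sign changes; then $\omega_0:(x_1,\ldots,x_{2n+1}) \mapsto (-x_1,\ldots,-x_{2n},x_{2n+1})$ has one-dimensional $V^{\omega_0} = \mathbb{R} e_{2n+1}$, forcing $W_P$ to arise from removing a single simple root. Among the fundamental weights $\omega_i$, only $\omega_1 = e_1$ lies in the $W$-orbit $\{\mathbb{R} e_i\}$ of $V^{\omega_0}$, so only $W_P = D_{2n}$ (removal of $\alpha_1$) is viable; counting cosets via the transitivity of $W$ on $\{\pm e_i\}$ yields $a_P = 2$.

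For type $E_6$, an eigenvalue analysis gives $\dim V^{\omega_0} = 2$ (the two $\sigma$-swapped pairs of simple roots), so at most two simple roots can be removed. Enumerating proper parabolics and using that $\sigma$ swaps $\omega_1 \leftrightarrow \omega_6$, $\omega_3 \leftrightarrow \omega_5$ while fixing $\omega_2, \omega_4$, only $W_P = D_5$ (removing $\alpha_1$ or $\alpha_6$) and $W_P = D_4$ (removing both) pass the necessary condition. For $W_P = D_5$ we identify $W/W_P$ with the 27 lines on a smooth cubic surface; the induced $\omega_0$-action has exactly three fixed lines, any two of which are incident (so the three lines form a tritangent plane), giving $a_P = 3$. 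For $W_P = D_4$, $|W/W_P| = 270$ parametrizes ordered pairs of intersecting lines; the $\omega_0$-fixed such pairs are the $3 \cdot 2 = 6$ ordered pairs among the three fixed lines, giving $a_P = 6$. The main obstacle is the explicit $E_6$ computation, which one would carry out either via the classical geometry of the 27 lines or, equivalently, by a direct character-theoretic evaluation of the induced permutation character at $\omega_0$ using the character table of $W(E_6)$.
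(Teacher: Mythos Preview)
Your approach is correct and genuinely different from the paper's, and in several places cleaner.

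For the central-$\omega_0$ types you use the fixed-space argument ($V^{W_P}\neq 0$ while $V^{-\mathrm{id}}=0$), which is equivalent to the paper's observation that $\omega_0$ has full support. For $A_n$ your count via $\omega_0$-stable set partitions is exactly the paper's argument. The real divergence is in $D_{2n+1}$: you identify $W/W_{P_1}$ with the $W$-orbit $\{\pm e_i\}$ and read off the two $\omega_0$-fixed points directly, whereas the paper carries out a lengthy Coxeter-word length-reduction to show only $I=\{1,\dots,n-1\}$ is possible and then appeals to the cohomology of a quadric. Your necessary condition $\omega\,V^{W_P}\subseteq V^{\omega_0}$, combined with the explicit fundamental weights, dispatches all other parabolics in one line; this is a substantial simplification.

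For $E_6$ the paper simply runs a {\tt SageMath} loop, while you invoke the classical geometry of the $27$ lines. Your route is more illuminating but, as you note yourself, not complete as written. Two points need filling in. First, the sentence ``only $W_P=D_5$ and $W_P=D_4$ pass the necessary condition'' is not justified by the $\sigma$-swap information alone: for the maximal parabolics removing $\alpha_2,\alpha_3,\alpha_4$ you should argue (e.g.\ by counting $-1$-eigenvalues) that no element of $W_P$ has four $-1$'s on $V$, hence cannot be conjugate to $\omega_0$; the non-maximal cases then follow by containment. Second, the assertions that $\omega_0$ fixes exactly three lines and that these three are pairwise incident are true but require either the character table of $W(E_6)$ on the $27$- and $270$-dimensional permutation modules or a concrete model of the cubic surface with its involution. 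Once those are supplied, your argument goes through; as stated it is a sketch rather than a proof for this case.
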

In particular, for all pairs $(G,P)$ not tabulated in \Cref{cor-0groups}, the EKL-degree of the map $\pi \colon \Spec K[V]^{W_P} \to \Spec K[V]^W$ \mbox{is given by}
$$\deg_0^{\on{EKL}} \pi = \frac{\deg_0\pi}{2} \cdot (\langle 1 \rangle + \langle -1 \rangle).$$

\subsection*{Acknowledgments}

\noindent This work was supervised by Kirsten Wickelgren at the 2019 Arizona Winter School. The first author was supported by Purdue University, and the second and third authors were supported by NSF Graduate Research Fellowship Grants 1656466 and 1745303. We would like to thank Kirsten Wickelgren and Matthias Wendt for offering advice and guidance and for engaging in many enlightening discussions. We thank the anonymous referee for several useful comments and suggestions. We would also like to acknowledge Nicholas Kuhn, Victor Petrov, J.~D.~Quigley, Jason Starr, James Tao, and Libby Taylor. We used {\tt SageMath} for explicit computations.

\section{Background Material} \label{sec-back}

Before we prove our results, we provide a brief exposition on Grothendieck-Witt rings and on the EKL-degree in the case of finite maps between affine spaces.

\subsection{The Grothendieck-Witt Ring}
\label{GWdefsec}
We recall the definition of the Grothendieck-Witt ring of $K$ and introduce notation for specifying its elements. For more information about Grothendieck-Witt rings, see~\cite{EKM08,Lam05,WW19}.
\begin{defn}
\label{GWdef1}
Denoted by $\on{GW}(K)$, the Grothendieck-Witt ring of $K$ is defined to be the group completion of the semi-ring (under the operations of direct sum and tensor product) of isomorphism classes of symmetric nondegenerate bilinear forms on finite-dimensional vector spaces valued in $K$.
\end{defn}


\begin{defn}
For $u \in K^\times$, define $\langle u \rangle \in \on{GW}(K)$ to be the class of the nondegenerate symmetric bilinear form that sends $(x,y) \in K^2$ to $u\cdot xy \in K$.
\end{defn}

The Grothendieck-Witt ring is generated by the classes $\langle u\rangle$ for $u\in K^{\times}$ and its relations can be written down explicitly \cite[Theorem 4.7]{EKM08}, giving a concrete presentation.

\label{GWrelations}

The class of the bilinear form $\begin{pmatrix} 0 & 1 \\ 1 & 0\end{pmatrix}$ in $\on{GW}(K)$ is known as the hyperbolic form and can be expressed as $\langle 1 \rangle + \langle -1 \rangle$. It is easy to deduce from~\cite[Theorem 4.7, p.~23]{EKM08} that the product of the class of the hyperbolic form with any element in $\on{GW}(K)$ is an integral multiple of the class of the hyperbolic form.

\subsection{The EKL-Degree}
\label{A1def}

In this subsection, we recall the definition of the EKL-degree, where we will largely follow \cite[Section 1]{KW19}. Let $f\colon \BA^n\to \BA^n$ be a morphism sending $0$ to $0$ such that $0\in f^{-1}(0)$ is isolated in its fiber, and let $(f_1,\ldots,f_n)$ be its component functions. 
\begin{defn}
\label{Qdef}
The local algebra of $f$ at $0 \in \mb{A}^n$ is $Q(f) \defeq K[x_1,\ldots,x_n]_{m_0}/(f_1,\ldots,f_n)$, where $m_0$ is the maximal ideal of the point $0$. The distinguished socle element is $E(f) \defeq \det(a_{ij}) \in Q(f)$ where $a_{ij}\in K[x_1,\ldots,x_n]$ are polynomials such that $f_i= \sum_j{a_{ij}\cdot (x_j-a_j)}$.
\end{defn}

\begin{defn}
\label{phidef}
To a linear functional $\phi\colon Q(f)\to K$, we can associate a symmetric bilinear form $\beta_{\phi}$ on $Q(f)$ defined by $\beta_{\phi}(a,b)=\phi(ab)$.
\end{defn}

\begin{defn} \label{thmdef-a1main}
The EKL-degree of $f$ at $0 \in \mb{A}^n$, denoted by $\deg_0^{\on{EKL}} f$, is given by the class of the bilinear form $\beta_\phi$ in $\on{GW}(K)$, where $\phi$ is any linear functional sending the distinguished socle element $E(f)$ to 1.
\end{defn}

\Cref{thmdef-a1main} does not depend on the choice of $\phi$ \cite[Lemma 6]{KW19}. In the proof of~\Cref{partialquotient} only, we will make use of the Jacobian element, which is defined as follows:
\begin{defn}
The Jacobian element is $J(f) \defeq \on{det}\left(\left.\frac{\partial f_i}{\partial x_j}\right|_0\right) \in Q(f)$.
\end{defn}
The Jacobian element and distinguished socle element are related to each other by the equation $J(f) = (\dim_K Q(f)) \cdot E(f)$~\cite[(4.7) Korollar]{SS75}, so $J(f)$ contains the same information as $E(f)$ if the characteristic of $K$ does not divide the dimension of $Q(f)$ as a $K$-vector space.

\subsection{Lie theory notation} \label{sec-not}
We fix notation regarding complex Lie groups and their associated Weyl groups. In this paper, $G$ denotes a simple complex Lie group and $P\subset G$ denotes a parabolic subgroup of $G$. The Weyl group $W$ of $G$ is generated by a set $\Sigma \subset W$, consisting of simple reflections, which is in bijection with the nodes of the Dynkin diagram of $G$. If $B$ is a minimal parabolic subgroup of $G$, then there is a bijection between parabolic subgroups $P$ containing $B$ and subsets $\Sigma_P$ of $\Sigma$. The parabolic subgroup $W_P \subset W$ determined by such a $P$ is the subgroup of $W$ generated by the corresponding subset of $\Sigma$. A parabolic subgroup $P \subset G$ is \emph{proper} if $P \neq G$, or equivalently if $\Sigma_P \neq \Sigma$. A proper parabolic subgroup $W_{P}$ is said to be \emph{maximal} if it is not properly contained in any other proper parabolic subgroup.

 The \emph{length} $\ell(\omega)$ of an element $\omega \in W$ is defined to be the length of the shortest expression of $\omega$ as a product of the elements of the generating set $\Sigma$. The \emph{longest word}, denoted by $\omega_0$, is the unique element of maximal length in $W$.

\begin{remark}
In this paper, we restrict our consideration to the case of simple complex Lie groups. However, the classification of simple complex Lie groups is the same as the classification of split simple algebraic groups over fields of arbitrary characteristic (see~\cite[Theorem~23.25 and Theorem~23.55]{MR3729270}). Furthermore, the actions of the Weyl groups on the root spaces are the same for both simple complex Lie groups and split simple algebraic groups. Thus, our results can be extended to hold for the case of split simple algebraic groups, with the caveat that Chow groups, as opposed to singular cohomology groups, must be used in the proof.
\end{remark}

\section{Proofs of the Results} \label{sec-proofs}

For all of the maps $\pi\colon \mb{A}^n\to \mb{A}^n$ that we consider, $\pi^{-1}(0)$ is supported at the origin. This is because the orbit of $0\in \mb{A}^n$ under a linear group action is just the origin. 

\subsection{Proof of Proposition~\ref{trivialthm}} \label{sec-prooftriv}
Here, $K$ is an arbitrary field of characteristic not equal to 2. Since $G$ contains a simple reflection $r$, the map $\pi$ factors through $\on{Spec}(K[V]^r)$:
\begin{align*}
    \pi \colon \on{Spec}(K[V]) \to \on{Spec}(K[V]^r) \to \on{Spec}(K[V]^G).
\end{align*}
It is easy to check (for example using~\cite[Section~1]{KW19}) that $$\deg_0^{\on{EKL}}(\on{Spec}(K[V])\to \on{Spec}(K[V])^r) = \langle 1\rangle+\langle -1\rangle.$$ By the fact that EKL-degrees are multiplicative in compositions,\footnote{Given the connection between the EKL-degree and the local $\BA^1$-Brouwer-degree established in~\cite{KW19}, the multiplicativity of the EKL-degree in compositions can be deduced from the corresponding statement for the local $\BA^1$-Brouwer-degree in the $1$-dimensional case (see~\cite[Section 2]{KW20}). We suspect that this fact is known in arbitrary dimension to experts, but we have included a purely algebraic proof in \Cref{sec:composition} for the sake of clarity.} we have that $$\deg_0^{\on{EKL}} \pi = (\langle 1\rangle+\langle -1\rangle) \cdot \deg_0^{\on{EKL}}(\on{Spec}(K[V])^r \to \on{Spec}(K[V]^G)).$$
As mentioned in~\Cref{GWdefsec}, it follows from the presentation of the Grothendieck-Witt ring in~\cite[Theorem 4.7]{EKM08} that any product with the class of the hyperbolic form $\langle 1\rangle+\langle -1\rangle$ is actually an integral multiple of the class of the hyperbolic form. Thus, there is some integer $N$ such that $\deg_0^{\on{EKL}} \pi = N \cdot (\langle 1 \rangle + \langle -1 \rangle)$. Taking the rank of $\deg_0^{\on{EKL}} \pi$, we find that $2N  = \deg_0\pi$, which is the desired result. \hspace*{\fill}\qed
\vspace*{0.2cm}

It turns out to be more efficient from an expository standpoint to prove Theorem~\ref{bigthm} and Proposition~\ref{cor-0groups} before Theorem~\ref{partialquotient}, so we order the remaining proofs accordingly.

\subsection{Proof of Theorem~\ref{bigthm}}
Here, $K$ is an arbitrary field of characteristic $0$. We setup the proof in Section~\ref{sec:idea} and prove the result in Section~\ref{sec:proof}.
\subsubsection{The Idea of the Proof}
\label{sec:idea}
 Consider the algebra
\begin{align*}
    Q\defeq Q(\pi) \simeq K[V]^{W_P}/(K[V]^{W})^+,
\end{align*}
where for a graded ring $R$, we denote by $R^+$ its irrelevant ideal. Suppose that we can produce a $K$-linear functional $\phi\colon Q\to K$ sending the distinguished socle element $E \in Q$ to $1$. Then because $\pi^{-1}(0) = \{0\}$, it follows from Definition~\ref{thmdef-a1main} that $\deg_0^{\on{EKL}} \pi$ is the class in $\on{GW}(k)$ of the symmetric bilinear form $\beta_\phi\colon Q\times Q\to K$ defined by $\beta_\phi(a_1,a_2)\defeq\phi(a_1 a_2)$.

We now briefly sketch our idea for producing the desired functional $\phi$. The key observation is that $Q$ can be identified with the singular cohomology ring of a certain partial flag variety, so we can choose $\phi$ to be a certain scalar multiple of the integration map. Because the cohomology of this partial flag variety is has a basis given by classes of Schubert varieties, and because each Schubert variety has a dual Schubert variety, this forces $\beta_\phi$ to be a direct sum of copies of the bilinear forms $\begin{pmatrix} \alpha \end{pmatrix}$ and $\begin{pmatrix} 0 & 1\\ 1 & 0\end{pmatrix}$, where the multiplicity of each form depends on how many Schubert varieties are cohomologically equivalent to their dual Schubert variety.

By \cite[Proposition 29.2(a)]{B53} and the assumption in Theorem~\ref{bigthm} that $\operatorname{char}(K)=0$, the partial flag variety $F\defeq G/P$ has cohomology\footnote{Note that our partial flag variety is defined over $\mb{C}$, but we take its cohomology with coefficients in $K$.}
\begin{align*}
    H^{\bullet}(F,K)  = K[V]^{W_P}/(K[V]^{W})^+ = Q.
\end{align*}
Integration over $G/P$ gives a functional $H^{\bullet}(F,K)\to K$. To understand the integration map explicitly, $H^{\bullet}(F,K)$ is graded by degree and is generated by the elements of degree 2. Integration maps the 1-dimensional $K$-vector space $H^{\dim_{\mathbb{R}}(F)}(F,K)$ isomorphically to $K$ and the other graded pieces (which are of smaller degree) of $H^{\bullet}(F,K)$ to zero. The class of a point $[\operatorname{pt}]\in H^{\dim_{\mathbb{R}}(F)}(F,K)$ integrates to 1.

We might try to take the functional $\phi$ to be the integration map on $H^{\bullet}(F,K)$, but to make this work, we would need to verify that the distinguished socle element $E$, viewed as an element of $H^{\bullet}(F,K)$, integrates to $1$. This also depends on the choice and ordering of polynomial generators of $K[V]^{W_P}$ and $K[V]^{W}$ providing isomorphisms $\on{Spec}(K[V]^{W_P})\simeq \on{Spec}(K[V]^{W})\simeq \BA^{\dim_K V}$. We verify this in the case where $G = \mathrm{SL}_n$ in Section~\ref{sec-sln}, where we used elementary symmetric functions as the generators in the invariant rings.

\subsubsection{The Proof}
\label{sec:proof}
Consider the integration map
\begin{align*}
    H^{\bullet}(F,K)\cong Q\to K.
\end{align*}
Let $\alpha \in K^\times$ be such that $\frac{1}{\alpha}$ is the integral of $E$, and let $\phi$ be such that $\frac{1}{\alpha} \cdot \phi$ is the integration map. We now compute the intersection pairing $\beta_\phi$ on $Q$. To do this, we use the following three facts (see~\cite[Section 2.1]{B05}):
\begin{enumerate}
\item the cohomology $H^{\bullet}(F,K)$ of $F$ has a basis given by the classes of the Schubert varieties;
\item Schubert varieties are indexed by cosets of $W/W_P$; and
\item the basis of Schubert varieties has a dual basis under the integration pairing, also given by Schubert varieties. The Schubert variety dual to the Schubert variety associated to the coset $\omega W_P$ is given by the coset $\omega_0\omega W_P$, where $\omega_0\in W$ is \mbox{the longest word.}
\end{enumerate}
It follows that the matrix of $\beta_\phi$ with respect to the basis of Schubert classes is block-diagonal, where the blocks are of two types: $\begin{pmatrix} \alpha \end{pmatrix}$ arising from self-dual Schubert classes, and $\begin{pmatrix} 0 & \alpha\\ \alpha & 0\end{pmatrix}$ arising from all other dual pairs. Note that the class of $\begin{pmatrix} \alpha \end{pmatrix}$ in $\on{GW}(K)$ is given by $ \langle \alpha \rangle$ and that~\cite[part (2) of Remark~1.15]{EKM08} implies that the class of $\begin{pmatrix} 0 & \alpha\\ \alpha & 0\end{pmatrix}$ is given by $\langle 1 \rangle + \langle -1 \rangle$.

Let $a_P$ be the number of self-dual Schubert classes. Then the number of other dual pairs of Schubert classes is simply given by $\frac{1}{2}(\dim_{K} Q - a_P) = \frac{1}{2}(\deg_0\pi - a_P).$
The theorem now follows upon observing that $a_P$ is equal to the number of cosets $\omega W_P$ such that $\omega_0\omega$ belongs to the same coset, which is equivalent to saying that $\omega^{-1} \omega_0 \omega \in W_P$. \hspace*{\fill} \qed

\subsection{Proof of \Cref{cor-0groups}}

Here, $K$ is an arbitrary field of characteristic $0$. Suppose that the Dynkin diagram of $G$ is not any one of $A_n$, $D_{n}$ for $n$ odd, or $E_6$. Then the longest word $\omega_0$ is in the center of $W$ (see~\cite[part (XI) of each of Planches I-IX, p.~250-275]{Bou}), and the support of $\omega_0$ is full (in the sense that one requires every generator of $W$ to express $\omega_0$). It follows that $\omega^{-1}\omega_0\omega = \omega_0$ is not contained in any parabolic subgroup of $W$, so we must have that $a_P = 0$. We treat the remaining cases separately as follows.

\subsubsection{The $A_n$ Case}

In this case, the Weyl group of $G$ is $W = S_n$, and any parabolic subgroup $W_P \subset W$ is of the form $W_P = \prod_{i = 1}^r S_{n_i}$, where $n = \sum_{i = 1}^r n_i$. The longest word $\omega_0\in S_n$ is the permutation that sends $i$ to $n-i$ for every $i$. Recall that the number $a_P$ of self-dual Schubert classes is equal to the number of cosets $\omega \cdot \prod_{i=1}^{r}S_{n_i}$ such that $\omega_0\omega$ belongs to the same coset, which is further equal to the number of elements in the set $P$ of partitions of $\{1,\ldots,n\}$ into blocks $B_1,\ldots,B_r$ (not necessarily contiguous) of sizes $n_1,\ldots,n_r$ such that swapping $n-i$ for each $i$ preserves those blocks. If all of the blocks are of even size, then $\# P$ is equal to the number of partitions of $\{1,\ldots,\frac{n}{2}\}$ into blocks of size $\frac{n_i}{2}$. If some block has odd size, then that block must contain $\frac{n+1}{2}$ (in particular, $n$ must be odd) and must therefore be the only block of odd size. Thus, if there is a single block of odd size that contains $\frac{n+1}{2}$, then $\# P$ is equal to the number of partitions of $\{1,\ldots,\frac{n-1}{2}\}$ into blocks of size $\lfloor\frac{n_i}{2}\rfloor$, and $\# P = 0$ otherwise. So $a_P = \#P = \lfloor\frac{n}{2}\rfloor! \big/ \prod_{i=1}^{r}\lfloor\frac{n_i}{2}\rfloor!$ if at most one $n_i$ is odd and $a_P = \#P =
    0$ otherwise, as desired.

\subsubsection{The $D_n$ case}
We take $n \geq 5$ to be odd (note that when $n = 3$, we have $D_3 = A_3$). In this case, the Weyl group of $G$ has presentation
$$W = \langle r_1, \dots, r_n : (r_ir_j)^{m_{ij}}=1 \rangle,$$
where the $m_{ij}$ are defined by
$$    m_{ij}=
    \begin{cases}
      1, & \text{if}\ i=j \\
      2, & \text{if}\ (i,j)=(1,2) \text{ or } |i-j|>1 \text{ for } (i,j) \neq (1,3),(3,1)\\
      3, & \text{if}\ |i-j|=1 \text{ for } i,j \geq 2, \text{ or if } (i,j)=(1,3),(3,1)
    \end{cases}$$
The generator $r_k$ of $W$ correspond to the node $k$ of the Dynkin diagram of $D_n$ labeled below.
\begin{center}
\begin{tikzcd}
\stackrel{1}{\bullet} \ar[dr,dash] & & &   \\
 & \stackrel{3}{\bullet} \ar[r,dash] & \cdots \ar[r,dash]  & \stackrel{n}{\bullet}\\
 \stackrel{2}{\bullet} \ar[ur,dash] & & &
\end{tikzcd}
\end{center}
We now restate the following useful facts from Section~\ref{sec-not} in terms of the generators $r_k$:
\begin{itemize}
\item The length $\ell(\omega)$ of $\omega \in W$ is the length of the shortest expression of $\omega$ as a product of the generators $r_k$.
\item The longest word $\omega_0$ is an involution satisfying $\ell(\omega_0) = n^2-n$, and when $n$ is odd, $\omega_0$ acts by conjugation on the generators as follows: $\omega_0r_1\omega_0^{-1}=r_2$, and $\omega_0r_i\omega_0^{-1}=r_i$ for $i \geq 3$ (see~\cite[part (XI) of Planche IV, p.~257]{Bou}).
\item The proper parabolic subgroups of $W$ are precisely those subgroups of the form $W_{P_I} = \langle r_i : i \in I\rangle$, where $I \subsetneq \lbrace 1, \dots, n \rbrace$ is any subset.
\item The maximal parabolic subgroup $W_{P_I}$ is given by taking $\# I = n-1$.
\end{itemize}
The following lemma tells us that $a_{P_I} = 0$ unless $I = \{1, \dots, n-1\}$:
\begin{lemma}
\label{claim:Dn}
 If $\omega^{-1}\omega_0\omega \in W_{P_I}$ for some $\omega \in W$ and proper parabolic subgroup $W_{P_I}$, then $I= \lbrace 1, \dots, n-1 \rbrace$.
\end{lemma}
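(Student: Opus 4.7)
The plan is to realize $V$ as $\mathbb{R}^n$, the standard ambient space for $D_n$, and to compare the fixed subspace of $\omega_0$ with the fixed subspaces common to standard parabolics. With the paper's labeling, one can take simple roots $\beta_1 = e_{n-1} - e_n$, $\beta_2 = e_{n-1} + e_n$, and $\beta_k = e_{n-k+1} - e_{n-k+2}$ for $3 \leq k \leq n$; then $W(D_n)$ acts on $V$ by signed permutations of the standard basis with an even number of sign changes. The key classical input is that, for $n$ odd, $\omega_0$ acts on $V$ by $e_i \mapsto -e_i$ for $i < n$ and $e_n \mapsto e_n$ (this is $-1$ composed with the diagram automorphism swapping $\beta_1$ and $\beta_2$, and it is straightforward to check against the conjugation relations in the excerpt).

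It follows immediately that the fixed subspace of $\omega_0$ in $V$ is the one-dimensional space $\mathrm{span}(e_n)$. Hence, for any $\omega \in W$, the fixed subspace of $w := \omega^{-1}\omega_0\omega$ is $\omega^{-1}(\mathrm{span}(e_n))$, and because $\omega$ acts as a signed permutation of the $e_i$'s, this fixed subspace must equal $\mathrm{span}(e_j)$ for some coordinate index $j$.

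On the other hand, every element of $W_{P_I}$ acts as the identity on $V_I^\perp := (\mathrm{span}\{\beta_i : i \in I\})^\perp$, which has dimension $n - |I|$. The assumption $w \in W_{P_I}$ therefore gives $V_I^\perp \subseteq \mathrm{Fix}(w)$; comparing against the one-dimensional fixed subspace of $w$ forces $n - |I| \leq 1$, and since $I$ is proper this pins down $|I| = n-1$. Moreover, $V_I^\perp$ must itself be spanned by a single coordinate vector $e_j$.

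To finish, I would run through the $n$ maximal parabolics $I = \{1,\ldots,n\} \setminus \{k\}$ and directly solve the equations $u \perp \beta_i$ for $i \neq k$. A short computation shows that for $k = 1,2$ the generator of $V_I^\perp$ is $e_1 + \cdots + e_{n-1} \mp e_n$, and for $3 \leq k \leq n-1$ it is $e_1 + \cdots + e_{n-k+1}$; none of these are coordinate vectors. Only for $k = n$ does the orthogonality system collapse everything except $e_1$, giving $V_I^\perp = \mathrm{span}(e_1)$. This forces $I = \{1,\ldots,n-1\}$, as claimed. The main (mild) obstacle is the bookkeeping in the case analysis, particularly aligning the paper's Dynkin labeling with the coordinate description of the simple roots; the rest is linear algebra.
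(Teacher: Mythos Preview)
Your argument is correct and takes a genuinely different route from the paper. The paper argues combinatorially at the level of words: it shows that one may replace $\omega$ by a representative of the coset $Z\omega$ (where $Z$ is the centralizer of $\omega_0$) lying in the small set $S=\{r_1\sigma_{3,k}:3\leq k\leq n\}$, via an explicit table of left-multiplication reductions, and then invokes the length inequality $\ell(\omega^{-1}\omega_0\omega)\geq \ell(\omega_0)-2\ell(\omega)\geq n^2-3n+2$ together with formulas for the length of the longest element of each maximal parabolic to rule out every $I$ with $n\notin I$. Your approach instead works directly in the reflection representation: the single observation that, for $n$ odd, $\omega_0$ has a one-dimensional fixed space which is a coordinate axis immediately forces $|I|=n-1$ and forces $V_I^{\perp}$ to be a coordinate axis as well, after which a short orthogonality computation with the simple roots singles out $k=n$. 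Your route is noticeably shorter and more conceptual, bypassing the word-reduction bookkeeping entirely; the paper's method, by contrast, stays inside the Coxeter presentation and never appeals to a linear model, which is perhaps more in the spirit of the rest of the argument but at the cost of a lengthier case analysis.
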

\begin{proof}
Note that for such an $\omega$ as in the statement of Lemma~\ref{claim:Dn}, any element $\omega' \in Z\omega$ also satisfies $(\omega')^{-1}\omega_0\omega' \in W_{P_I}$, where $Z$ is the centralizer of $\omega_0$ in $W$.

The following table shows how to reduce the length of a coset representative of $Z\omega$ by left-multiplication with elements of $Z$. In each row, the leftmost entry is a possible starting segment $b$ for $\omega$ expressed as a word $\omega = b \cdot c$, the middle entry is a re-expression of the starting segment $b$ that is more convenient for the purpose of length reduction, and the rightmost entry is a shortened segment $b' \in Z b$ with $\ell(b') < \ell(b)$. Before we demonstrate how to perform the length reduction, we narrow down the list of starting segments that we need to work with:
\begin{itemize}
\item First, because $\{r_i : i\geq 3\} \subset Z$, it is sufficient to consider starting segments $b$ that begin with $r_1$ or $r_2$.
\item Moreover, because $r_2r_1 \in Z$, and $r_2r_1r_2=r_1$, it is sufficient to consider starting segments $b$ that begin with $r_1$.
\item Finally, because $r_1r_2 \in Z$ and because $\{r_i : i \geq 4\}$ is contained in the centralizer of $r_1$, it is sufficient to consider starting segments $b$ that begin with $r_1r_3$.
\end{itemize}
Let $\sigma_{i,k}\in W$ be defined by
\begin{equation*}
    \sigma_{i,k}=
    \begin{cases}
      \prod_{j = i}^k r_j, & \text{if}\  i \leq k \\
      1, & \text{if}\ i>k
    \end{cases}
  \end{equation*}
  In terms of the $\sigma_{i,k}$, one verifies using the three itemized observations above that the possible starting segments that we need to work with are given by $r_1r_3r_1$, $r_1r_3r_2$, $r_1\sigma_{3,k}r_j$, $r_1\sigma_{3,k}r_j$, and $r_1r_3r_j$. We now present the table indicating how to reduce the lengths of words with these initial segments:
\begin{center}
\begin{tabular}{ |c|c|c|c| }
\hline
Starting Segment & Re-expression of Starting Segment & Shortened Segment & Conditions \\
 \hline
 $r_1r_3r_1$    & $r_3 \cdot r_1r_3$  & $r_1r_3$ & n/a \\
 \hline
 $r_1r_3r_2$  & $r_1r_3r_2 \cdot (r_2r_3)^3 = (r_1r_2r_3) \cdot r_2r_3 $ & $r_1r_3$ & n/a \\
 \hline
 $r_1\sigma_{3,k}r_j$   & $(r_1r_3r_j) \cdot \sigma_{4,k} $ & $r_1\sigma_{3,k}$ & $1 \leq j \leq 2$ \\
 \hline
 $r_1\sigma_{3,k}r_j $  & $ r_1r_3\sigma_{4,j+1}r_j\sigma_{j+2,k} = r_{j+1}\cdot r_1r_3\sigma_{4,k} $ & $r_1\sigma_{3,k} $ & $3 \leq j \leq k-1$ \\
 \hline
  $r_1r_3r_j$  & $r_j \cdot r_1r_3$ & $r_1r_3$ & $j>4$\\
 \hline
\end{tabular}
\end{center}\vspace*{0.2cm}
For example, the reduction in row 4 of the table is justified as follows: the defining relations of $W$ imply that $\sigma_{j+2,k}r_j=r_j \sigma_{j+2,k}$, and that $r_jr_{j+1}r_j=r_{j+1}r_jr_{j+1}$.

Since each row of the table constitutes a reduction in length, we have shown that if the conjugacy class of $\omega_0$ meets $W_{P_I}$, then there is an element
$$\omega \in S\defeq \lbrace r_1\sigma_{3,k} : 3 \leq k \leq n \rbrace$$
such that $\omega^{-1}\omega_0\omega \in W_{P_I}$. The length of the longest element of $S$ is $n-1$, so we deduce that
$$\ell(\omega^{-1}\omega_0\omega) \geq \ell(\omega_0)-2\cdot \ell(\omega) \geq (n^2-n)-2\cdot (n-1) = n^2-3n+2.$$
To finish the proof of the claim, it is enough to see that $\ell_k < n^2 - 3n + 2$, where $k<n$ and $\ell_k$ is the length of the longest element of the (unique) maximal parabolic subgroup not containing $r_k$. The maximal lengths in a Weyl group of type $A_r$ or $D_r$ are $\binom{r+1}{2}$ and $r^2-r$, respectively (see~\cite[part (I) of each of Planche I, p.~250 and Planche IV, p.~256]{Bou}). Using this fact together with the additivity of maximal lengths in products of Coxeter groups, we find that
  \begin{equation*}
   \ell_k=
    \begin{cases}
      \binom{n}{2}, & \text{if}\ 1 \leq k \leq 2 \\
      (k-1)^2-(k-1) + \binom{n-k+1}{2}, & \text{if}\ 2<k<n
    \end{cases}
  \end{equation*}
  It is easy to check in each case that $f(n,k) \defeq n^2-3n+2-\ell_k >0$ when $n \geq 5$. For example, in the case $2<k<n$, one readily checks that $f(n,k)$ is minimal when $k=n-1$, in which case $f(n,n-1) = 4n-10 > 0$. Thus we have proven the claim.
  \end{proof}

  Let $I = \lbrace 1, \dots ,n-1 \rbrace$, and let $P_I \subset G = \mathrm{SO}(2n)$ be an associated parabolic subgroup. We can realize the flag variety $G/P_I$ as a smooth quadric hypersurface $X$ of dimension $2n-2$. Indeed, under the obvious transitive action of $G$ on such a quadric, the subgroup $M=\mathrm{SO}(2n-2) \subset P_I \subset G$ (embedded in the standard way by acting as the identity on the last two coordinates) stabilizes a point $p \in X$. Since the stabilizer $M' \subset G$ of $p$ is parabolic and contains $M$, it follows by inspecting its Dynkin diagram that $M' = P_I$. By~\cite[Proof of Theorem 1.13]{Reid72}, we have that $$H^{n-1}(X,\mathbb{Z})=\mathbb{Z}L_1 \oplus \mathbb{Z}L_2,$$ where the $L_i$ are classes of linear subspaces on $X$ satisfying $L_1^2=L_2^2=1$ and $L_1 \cdot L_2 = 0$. Thus, there are exactly two self-dual classes, meaning that $a_{P_I} = 2$, as desired.

  \subsubsection{The $E_6$ Case} We will use {\tt SageMath} to compute this case. We label the nodes of the Dynkin diagram of $E_6$ as below:
  \begin{center}
    \begin{tikzcd}
     & & \stackrel{2}{\bullet} \ar[d,dash] & & \\
    \stackrel{1}{\bullet} \ar[r,dash] & \stackrel{3}{\bullet}\ar[r,dash] &\stackrel{4}{\bullet}\ar[r,dash] &\stackrel{5}{\bullet}\ar[r,dash] &\stackrel{6}{\bullet}
    \end{tikzcd}
\end{center}
Each parabolic subgroup $W_P$ of the Weyl group $W$ associated to $E_6$ corresponds to deleting a subset of the nodes in the Dynkin diagram. For each such $W_P$, we are interested in computing the the number $a_P$ of cosets $\omega W_P$ such that $\omega^{-1}\omega_0\omega\in W_P$, which is given as follows:
\begin{align}
\label{eq:E6number}
a_P = \frac{\#\{\omega \in W : \omega^{-1}\omega_0\omega \in W_P\}}{\# W_P}.
\end{align}
To compute the quantity~\eqref{eq:E6number}, we start with the following code, which initializes our Weyl group $W$ and longest word $\omega_0$ in {\tt SageMath}:
  \scriptsize
  \begin{adjustwidth}{0.5in}{0in}
  \begin{alltt}

  W=WeylGroup(["E", 6]);
  w0=E6.w0;
  \end{alltt}
\end{adjustwidth}
\normalsize
The first case to consider is where $W_P$ is a maximal parabolic subgroup; in terms of the Dynkin diagram, such a subgroup corresponds to deleting a single node from the Dynkin diagram of $E_6$. The number of distinct diagrams that arise from deleting a single node from the Dynkin diagram of $E_6$ is four, since deleting node 1 is equivalent to deleting node 6 and deleting node 3 is equivalent to deleting node 5 by symmetry. Deleting node 1 yields the Dynkin diagram
  \begin{center}
    \begin{tikzcd}
     & & \stackrel{2}{\bullet} \ar[d,dash] & & \\
    \stackrel{1}{\times} \ar[r,dash] & \stackrel{3}{\bullet}\ar[r,dash] &\stackrel{4}{\bullet}\ar[r,dash] &\stackrel{5}{\bullet}\ar[r,dash] &\stackrel{6}{\bullet}
    \end{tikzcd}
\end{center}
which is the Dynkin diagram of $D_5$. For this choice of the maximal parabolic subgroup $W_P$, the following code computes $\#\{\omega \in W : \omega^{-1}\omega_0\omega \in W_P\}$ to be $5760$:
  \scriptsize
  \begin{adjustwidth}{0.3in}{0in}
  \begin{alltt}

    INPUT:
    i=0;
    for w in W:
        if len((w.inverse()*w0*w).coset_representative([2,3,4,5,6]).reduced_word())==0:
            i=i+1;
    print i;

    OUTPUT:
    5760
  \end{alltt}
\end{adjustwidth}
\normalsize
Moreover, by~\cite[part (X) of Planche IV, p.~257]{Bou}, we have that $\# W_P=\#W_{D_5}=2^4 \cdot 5! = 1920$ in this case, so using~\eqref{eq:E6number}, we find that
$$a_P = \frac{\#\{\omega \in W : \omega^{-1}\omega_0\omega \in W_P\}}{\# W_P} = \frac{5760}{1920} = 3.$$
In the remaining three cases where $W_P$ is a maximal parabolic arising from deleting nodes 2, 3, and 4, respectively, a similar block of code tells us that $\#\{\omega \in W : \omega^{-1}\omega_0\omega \in W_P\}=0$, implying that $a_P = 0$.

For the non-maximal parabolic subgroups $W_P$, we have that $a_P = 0$ whenever $W_P$ is contained in a maximal parabolic subgroup $W_{P'}$ for which $a_{P'} = 0$. Thus, it remains to consider those $W_P$ that are \emph{not} contained in any maximal parabolic subgroup $W_{P'}$ with $a_{P'} = 0$. Since there is only one maximal parabolic subgroup $P'$ with $a_{P'} \neq 0$, one verifies by inspection that the only such $P$ having the required property
is obtained by deleting the nodes labeled $1$ and $6$ from the Dynkin diagram of $E_6$, as illustrated below:
\begin{center}
    \begin{tikzcd}
     & & \stackrel{2}{\bullet} \ar[d,dash] & & \\
    \stackrel{1}{\times} \ar[r,dash] & \stackrel{3}{\bullet}\ar[r,dash] &\stackrel{4}{\bullet}\ar[r,dash] &\stackrel{5}{\bullet}\ar[r,dash] &\stackrel{6}{\times}
    \end{tikzcd}
\end{center}
For this choice of $W_P$, the following code computes $\#\{\omega \in W : \omega^{-1}\omega_0\omega \in W_P\}$ to be $1152$:
  \scriptsize
  \begin{adjustwidth}{0.5in}{0in}
\begin{alltt}

INPUT:
i=0;
for w in W:
    if len((w.inverse()*w0*w).coset_representative([2,3,4,5]).reduced_word())==0:
        i=i+1;
print i;

OUTPUT:
1152
\end{alltt}
\end{adjustwidth}
\normalsize
\vspace*{0.3cm}
 Moreover, by~\cite[part (X) of Planche IV, p.~257]{Bou}, we have that $\# W_P=\#W_{D_4}=2^3 \cdot 4! = 192$, so using~\eqref{eq:E6number}, we find that
 $$a_P = \frac{\#\{\omega \in W : \omega^{-1}\omega_0\omega \in W_P\}}{\# W_P} = \frac{1152}{192} = 6.$$
 This completes the proof of \Cref{cor-0groups}. \hspace*{\fill} \qed

\subsection{Proof of Theorem~\ref{partialquotient}} \label{sec-sln}
Here, $K$ is an arbitrary field of characteristic not equal to 2. The idea is to use the same strategy as in the proof of Theorem~\ref{bigthm}. For convenience, let $m_i=\sum_{j = 1}^i n_j$. Consider the partial flag variety $F\defeq F(m_1,\dots,m_r)$ parametrizing flags of $\mathbb{C}$-vector spaces $0\subset V_1\subset \cdots\subset V_r=\mathbb{C}^n$ where $V_i$ has dimension $m_i$. By~\cite[Proposition 31.1]{B53}, the integral cohomology ring of $F$ is given by
\begin{align}
    H^{\bullet}(F,\mathbb{Z}) = \frac{\mathbb{Z}[x_1,\ldots,x_n]^{\prod_{i=1}^{r}S_{n_i}}}{(\mathbb{Z}[x_1,\ldots,x_n]^{S_n})^+}. \label{eq-IntCohom}
    \end{align}
For any field $K$ (regardless of characteristic), we have that
$$Q:=Q(\pi) = \frac{K[x_1,\ldots,x_n]^{\prod_{i=1}^{r}S_{n_i}}}{(K[x_1,\ldots,x_n]^{S_n})^+} = H^{\bullet}(F,K).$$
We want to take the functional $\phi$ to be the integration map on $H^{\bullet}(F,K)$, so we need to verify that the distinguished socle element $E:=E(\pi)$, viewed as an element of $H^{\bullet}(F,K)$, integrates to $1$. To do this, consider the element $\wt{E} \in \mathbb{Z}[x_1, \dots, x_n]$ defined by the formula for the distinguished socle element in \Cref{Qdef}. Viewing $\wt{E}$ as an element of $H^{\bullet}(F,\mathbb{Z})$ via the identification~\eqref{eq-IntCohom}, it is easy to see that the image of $\wt{E}$ under the map $H^{\bullet}(F,\mathbb{Z}) \to H^{\bullet}(F,K)$ is equal to $E$. It now suffices to show that $\wt{E}$ is equal to the class of a point in $H^{\bullet}(F,\mathbb{Z})$.

Notice that $\wt{E}\in H^{\text{top}}(F,\mathbb{Z})$ and that $H^{\text{top}}(F, \mathbb{Z}) \simeq \mathbb{Z}$. By~\cite[proof of Korollar 4.7]{SS75} (see also \cite[proof of Lemma 4]{KW19}), $E$ is nonzero independent of $K$, so we can vary $K = \mathbb{F}_p$ over all primes $p$ to see that the image of $\wt{E}$ in $H^{\text{top}}(F, \mathbb{F}_p)$ must be nonzero for each prime $p$. It follows that $\wt{E}$ is a generator of $H^{\text{top}}(F,\mathbb{Z})\simeq \mathbb{Z}$ and therefore agrees with the class of a point up to sign.

To determine the sign, it suffices to compute the sign of the Jacobian element $J:=J(\pi)$, taking $K=\mathbb{Q}$. We first consider the case where $n_i=1$ for every $i$. In this case, the Jacobian element is $J = \prod_{1\leq i<j\leq n}(x_i-x_j)$ by~\cite[Equation (1)]{LP02}; notice that $J$ is a Vandermonde determinant and can be expressed using the Leibniz formula as
\begin{equation} \label{eq-1}
J = \prod_{1\leq i<j\leq n}(x_{i}-x_{j}) = \sum_{\sigma \in S_n} \on{sign}(\sigma) \cdot \prod_{i = 1}^n x_{\sigma(i)}^{n-i}.
\end{equation}
On the other hand, the class of a point in $F$ is by definition given by $\prod_{i = 1}^n x_i^{n-i}$ (see~\cite[Section 1]{BJS93}). For any $\sigma \in S_n$, we have that
\begin{equation} \label{eq-2}
\sigma \cdot \prod_{i = 1}^n x_i^{n-i} = \prod_{i = 1}^n x_{\sigma(i)}^{n-i} = \on{sign}(\sigma) \cdot \prod_{i = 1}^n x_i^{n-i}.
\end{equation}
It follows from combining~\eqref{eq-1} and~\eqref{eq-2} that $J = n! \cdot \prod_{i = 1}^n x_i^{n-i} \in H^{\text{top}}(F,\mathbb{Z})$\mbox{, so the signs agree.}

We next consider the general case where not every $n_i$ is equal to $1$. Consider the composition of maps
\begin{align} \label{eq-comp}
   \on{Spec}(\mb{Q}[x_1,\ldots,x_n])\to \on{Spec}(\mb{Q}[x_1,\ldots,x_n]^{\prod_{i=1}^{r}S_i})\to \on{Spec}(\mb{Q}[x_1,\ldots,x_n]^{S_n}).
\end{align}
The Jacobian element of the first map in~\eqref{eq-comp} is the product of the Jacobian elements of the maps $\on{Spec}(\mb{Q}[x_{m_{k-1}+1},\ldots,x_{m_k}]\to \on{Spec}(\mb{Q}[x_{m_{k-1}+1},\ldots,x_{m_k}]^{S_k})$ over $1 \leq k \leq r$. It then follows from the Chain Rule that the Jacobian element of $\on{Spec}(\mb{Q}[x_1,\ldots,x_n]^{\prod_{i=1}^{r}S_i})\to \on{Spec}(\mb{Q}[x_1,\ldots,x_n]^{S_n})$ is
\begin{align}
  J & = \prod_{1\leq i<j\leq n}(x_{i}-x_{j}) \bigg/\prod_{k = 1}^r \prod_{m_{k-1}+1\leq i<j\leq m_k}(x_{i}-x_{j}) \nonumber\\
  & =  \prod_{1\leq k<\ell\leq r}\prod_{i=1}^{n_k}\prod_{j=1}^{n_\ell}(x_{m_{k-1}+i}-x_{m_{\ell-1}+j}).\label{Jelement}
\end{align}
In words, this Jacobian element takes the same form as the product of differences $\prod_{1\leq i<j\leq n}(x_i-x_j)$, but instead of taking all pairwise differences $x_i-x_j$ for $i<j$, we instead take the pairs $i<j$ such that $i$ and $j$ are from different blocks, where we partition $\{1,\dots,n\}$ into contiguous blocks of size $n_1,\dots,n_r$. Now, we want to compare the Jacobian element with the class of a point in $F$. As before, we visibly see that swapping two variables from different blocks switches the sign and swapping two variables from the same block preserves the Jacobian element. Therefore, the same is true for the formula for the class of a point in $F$.

By~\cite[Section 2.1]{B05}, the class of a point in $F$ is the Schubert polynomial associated to the permutation \begin{equation} \label{eq-perm}
m_{r-1}+1,\ldots,m_{r},m_{r-2}+1,\ldots,m_{r-1},\cdots, 1,\ldots,m_{1}
\end{equation}
of the list $1, \dots, n$. In words, the permutation~\eqref{eq-perm} takes the numbers $1,\ldots,n$, splits them up into contiguous blocks of size $n_1,\ldots,n_r$ and reverses the order of the blocks (keeping the order within each block fixed). By~\cite[Block decomposition formula]{BJS93}, the Schubert polynomial associated to~\eqref{eq-perm} is given by $$\prod_{i=1}^{r}\left(\prod_{j=1}^{n_i}x_j\right)^{\sum_{k = i+1}^r n_k}$$
Expanding out~\eqref{Jelement} and keeping track of the signs, we find that
$$J = \frac{n!}{\prod_{i=1}^{r}n_i!} \cdot \prod_{i=1}^{r}\left(\prod_{j=1}^{n_i}x_j\right)^{\sum_{k = i+1}^r n_k}$$
so the signs agree.

We deduce that $\alpha = 1$, so the theorem now follows from Theorem~\ref{bigthm} and \Cref{cor-0groups}.\hspace*{\fill}\qed

\appendix
\section{$\on{EKL}$-Degrees and Compositions}
\label{sec:composition}
In this section, we present a purely algebraic proof that the $\on{EKL}$-degree is multiplicative in compositions. As mentioned in Section~\ref{sec-prooftriv}, it is possible to prove this by utilizing the relationship between $\on{EKL}$-degrees and local $\BA^1$-Brouwer-degrees proven in~\cite{KW19}; nonetheless, we think the following proof is of independent interest. Here, $K$ is an arbitrary field of characteristic not equal to $2$.

The key building block in the proof is the following lemma.

\begin{lemma}
\label{rowoperation}
Let $f,g\colon \mathbb{A}^n\to \mathbb{A}^n$ be morphisms that send the origin to itself with $0\in f^{-1}(0)$ and $0\in g^{-1}(0)$ isolated in their fibers. Let $A\in  K^{n\times n}$ be a unipotent matrix, and let $L$ be the map defined by
\begin{align*}
    L\colon \mathbb{A}^n&\to \mathbb{A}^n\\
    x&\mapsto Ax,
\end{align*}
where $I_n$ is the $n\times n$ identity matrix. Then we have that
\begin{align*}
    \deg_0^{\on{EKL}}(f\circ g) = \deg_0^{\on{EKL}}(f\circ L \circ g).
\end{align*}
\end{lemma}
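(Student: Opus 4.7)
The plan is to interpolate between the identity and $A$ via a one-parameter family of unipotent matrices, set up a relative EKL construction over the parameter space, and invoke homotopy invariance of the Grothendieck-Witt ring. Writing $A = I_n + N$ with $N$ nilpotent, let $A_t \defeq I_n + tN$ for $t \in K$; each $A_t$ is unipotent, with $A_0 = I_n$ and $A_1 = A$. Let $L_t \colon \BA^n \to \BA^n$ denote the linear map with matrix $A_t$, and set $F_t \defeq f \circ L_t \circ g$, so that $F_0 = f \circ g$ and $F_1 = f \circ L \circ g$. Since each $L_t$ is invertible, the origin is isolated in $F_t^{-1}(0)$ for every $t \in K$, and the goal becomes to show $\deg_0^{\on{EKL}}(F_0) = \deg_0^{\on{EKL}}(F_1)$.

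I first introduce the relative local algebra $\mc{Q} \defeq K[t,x_1,\ldots,x_n]/(F_{t,1}, \ldots, F_{t,n})$, where $F_{t,k} \in K[t,x]$ is the $k$-th component of $F_t$. Set-theoretically $V(F_t) = \BA^1 \times \{0\}$, so by Noetherianity there is a uniform $M$ with $(x_1,\ldots,x_n)^M \subseteq (F_t)$, making $\mc{Q}$ finitely generated as a $K[t]$-module. At every closed point $(t_0, 0) \in \BA^1 \times \BA^n$, the stalk of $\mc{Q}$ is a one-dimensional complete intersection over the regular local ring $K[t]_{(t - t_0)}$ whose closed fiber has $K$-dimension $\deg_0 f \cdot \deg_0 g$ (constant in $t_0$ because $L_t$ being invertible implies $(A_t g) = (g)$ as ideals, so the classical multiplicativity of local degrees gives $\deg_0 F_t = \deg_0 f \cdot \deg_0 g$). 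Miracle flatness then yields freeness of rank $\deg_0 f \cdot \deg_0 g$ at each stalk, so $\mc{Q}$ is locally free of constant rank over $K[t]$, hence free over the PID $K[t]$.

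Next, expanding $F_{t,k}(x) = \sum_{j=1}^n a_{k,j}(t,x) \cdot x_j$ with $a_{k,j} \in K[t,x]$, I define the relative distinguished socle element $\wt{E} \defeq \det(a_{k,j}) \in \mc{Q}$; this specializes at each closed point $t_0$ to the distinguished socle element $E(F_{t_0}) \in Q(F_{t_0})$. Since $\wt{E}$ is nonzero in every fiber, there exists a $K[t]$-linear functional $\wt{\phi}\colon \mc{Q} \to K[t]$ with $\wt{\phi}(\wt{E}) = 1$. The symmetric bilinear form $\beta_{\wt{\phi}}\colon \mc{Q} \otimes_{K[t]} \mc{Q} \to K[t]$ it determines is nondegenerate by fiber-wise nondegeneracy of the EKL-form, and its class in $\on{GW}(K[t])$ specializes at $t = t_0$ to $\deg_0^{\on{EKL}}(F_{t_0})$.

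Finally, I invoke homotopy invariance of the Grothendieck-Witt ring: for $K$ of characteristic not $2$, the inclusion $K \hookrightarrow K[t]$ induces an isomorphism $\on{GW}(K) \xrightarrow{\sim} \on{GW}(K[t])$, so a nondegenerate symmetric bilinear form over $K[t]$ has the same specialization in $\on{GW}(K)$ at every closed point of $\BA^1$. Applying this to $\beta_{\wt{\phi}}$ at $t = 0$ and $t = 1$ yields $\deg_0^{\on{EKL}}(f \circ g) = \deg_0^{\on{EKL}}(f \circ L \circ g)$, the desired equality. The hard part will be the bundle of claims in step two: establishing the local freeness of $\mc{Q}$ in the family and checking that the socle construction and pairing behave compatibly with specialization at every closed fiber; once these are verified, the homotopy invariance step is essentially a black box.
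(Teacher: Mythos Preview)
Your approach is essentially that of the paper: interpolate between $I_n$ and $A$ via the one-parameter unipotent family $A_t = I_n + t(A-I_n)$, assemble the EKL data into a family over $K[t]$, and invoke constancy of the Grothendieck--Witt class along the family. The paper packages the last step as a citation of Harder's theorem (\cite[Lemma~30]{KW19}), which is precisely the homotopy invariance $\on{GW}(K)\xrightarrow{\sim}\on{GW}(K[t])$ you invoke, so there is no substantive difference in strategy---you are just unpacking what the paper leaves to the reference.

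One technical caveat: the hypotheses only assert that $0$ is \emph{isolated} in $f^{-1}(0)$ and $g^{-1}(0)$, not that these fibers equal $\{0\}$, so your set-theoretic claim $V(F_t)=\BA^1\times\{0\}$ is not justified in general, and hence neither is the global finiteness of $\mc{Q}=K[t,x]/(F_t)$ over $K[t]$ nor the fiber-dimension count. The fix is routine: replace $\mc{Q}$ by its localization along the zero section (equivalently, work in $K[t][x]_{(x_1,\ldots,x_n)}/(F_t)$), after which your miracle-flatness and relative-socle arguments go through exactly as you sketch.
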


\begin{proof}
Let $t$ be a parameter, and consider the family of linear maps $L_t$ defined by the matrices $I_n+t(A-I_n)$. Since $A$ is unipotent, $\det(I_n+t(A-I_n))=1$, so $I_n+t(A-I_n)$ is invertible as a matrix with entries in the polynomial ring $K[t]$. Composing on the right and on the left with $g$ and $f$, respectively, yields the family of composite maps $f\circ L_t\circ g$. Upon applying the definition of $\on{EKL}$-degree (see~\Cref{Qdef,phidef,thmdef-a1main}) to each map in this family, we obtain a family of local algebras $Q(f \circ L_t \circ g)$, each equipped with a bilinear form $\beta_{\phi_t}$. Note that when $t=0$, the form $\beta_{\phi_0}$ represents the class $ \deg_0^{\on{EKL}}(f\circ g) \in \on{GW}(K)$, and when $t=1$, the form $\beta_{\phi_1}$ represents the class $\deg_0^{\on{EKL}}(f\circ L\circ g) \in \on{GW}(K)$. By a version of Harder's theorem (see~\cite[Lemma 30]{KW19}), the class in $\on{GW}(K)$ represented by $\beta_{\phi_t}$ is independent of $t\in K$, so $ \deg_0^{\on{EKL}}(f\circ g)=\deg_0^{\on{EKL}}(f \circ L_t \circ g)$ for each $t \in K$.
\end{proof}

\begin{remark}
Although \Cref{rowoperation} was stated with $A$ unipotent for maximal generality, we only need the case where $A$ is upper or lower triangular and unipotent in what follows.
\end{remark}

We are now ready to prove the desired result on the $\on{EKL}$-degree of a composition.

\begin{theorem}
\label{composition}
Let $f,g\colon \mathbb{A}^{n}\to \mathbb{A}^{n}$ be morphisms sending the origin to itself with $0\in f^{-1}(0)$ and $0\in g^{-1}(0)$ isolated in their fibers. Then, the $\on{EKL}$-degree of the composition $f \circ g$ is given by the product
\begin{align*}
    \deg_0^{\on{EKL}}(f\circ g) = (\deg_0^{\on{EKL}} f)\cdot ( \deg_0^{\on{EKL}} g)
\end{align*}
in $\on{GW}(K)$.
\end{theorem}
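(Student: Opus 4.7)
The plan is to reduce the composition $f\circ g$ to a pure product map via an auxiliary construction on a doubled affine space. Define $F, H \colon \BA^{2n} \to \BA^{2n}$ by $F(x,y) = (g(x) - y,\ f(y))$ and $H(x,y) = (g(x),\ f(y))$. I will establish the chain of equalities
\[
\deg_0^{\on{EKL}}(f \circ g) \;=\; \deg_0^{\on{EKL}}(F) \;=\; \deg_0^{\on{EKL}}(H) \;=\; \deg_0^{\on{EKL}}(f) \cdot \deg_0^{\on{EKL}}(g),
\]
which together prove the theorem.

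For the first equality, I will eliminate the variables $y_i$ from the presentation of $Q(F)$ using the relations $y_i = g_i(x)$ to obtain a $K$-algebra isomorphism $Q(F) \simeq Q(f \circ g)$. Writing $g_i(x) = \sum_j b_{ij}(x)\, x_j$ and $f_i(y) = \sum_j a_{ij}(y)\, y_j$, the coefficient matrix for $F$ in the coordinates $(x, y)$ is block upper-triangular with diagonal blocks $b(x)$ and $a(y)$ and off-diagonal block $-I$, so $E(F) = \det(b(x))\det(a(y)) = E(g)(x) \cdot E(f)(y)$. Substituting $y = g(x)$ identifies this with $E(f\circ g) = \det(A(g)\,B) = E(f)(g(x)) \cdot E(g)(x)$ by multiplicativity of determinants, so the isomorphism $Q(F) \simeq Q(f\circ g)$ carries socle element to socle element and hence the EKL-form of $F$ to that of $f \circ g$.

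For the second equality, I will deform $F$ to $H$ along the family $F_t(x,y) = (g(x) - t y,\ f(y))$, which interpolates between $F_1 = F$ and $F_0 = H$. The origin remains isolated in $F_t^{-1}(0)$ for every $t$: from $f(y)=0$ the fiber forces $y=0$ locally, and then $g(x)=ty=0$ forces $x=0$. Moreover, the socle element $E(F_t) = E(g)(x) \cdot E(f)(y)$ is independent of $t$ and nonzero in $Q(F_t)$ throughout, so the version of Harder's theorem invoked in the proof of Lemma~\ref{rowoperation} applies verbatim to conclude that the class in $\on{GW}(K)$ of the associated bilinear form is constant in $t$. For the third equality, the product structure of $H$ yields $Q(H) \simeq Q(g) \otimes_K Q(f)$ with $E(H) = E(g) \otimes E(f)$; taking the functional $\psi \otimes \phi$ for linear functionals $\psi$ and $\phi$ realizing the EKL-forms of $g$ and $f$ produces the bilinear form $\beta_\psi \otimes \beta_\phi$, whose class in $\on{GW}(K)$ is the product $\deg_0^{\on{EKL}}(f) \cdot \deg_0^{\on{EKL}}(g)$.

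The main technical obstacle is the second step: Lemma~\ref{rowoperation} as stated treats only unipotent linear deformations, whereas $F_t$ is not of that form. However, the underlying Harder-theorem argument used in the proof of the lemma only requires the family of socle elements to stay nondegenerate along the deformation, a condition that is verified directly above; thus the needed extension is essentially cosmetic.
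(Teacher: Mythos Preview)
Your argument is correct and takes a genuinely different (and shorter) route than the paper's. Both approaches land on the product map $H(x,y)=(g(x),f(y))$ in $\BA^{2n}$, but the paper gets there from $\wt f\circ\wt g:(x,y)\mapsto(f(g(x)),y)$ by a sequence of six unipotent linear insertions and two applications of Lemma~\ref{rowoperation}, whereas you start from $F(x,y)=(g(x)-y,f(y))$, identify $Q(F)\simeq Q(f\circ g)$ by the substitution $y\mapsto g(x)$, and then deform $F$ to $H$ in one move. Your first step is the new idea: the explicit change of variables, together with the block-triangular socle computation and the chain-rule identity $E(f\circ g)=E(f)(g(x))\cdot E(g)(x)$, pins down the isometry $Q(F)\cong Q(f\circ g)$ carrying EKL-form to EKL-form.

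One remark that actually strengthens your write-up: the extension of Lemma~\ref{rowoperation} you flag as a ``technical obstacle'' is unnecessary. Your family factors as
\[
F_t \;=\; P\circ L_t\circ G,\qquad G(x,y)=(g(x),y),\quad L_t(u,y)=(u-ty,y),\quad P(v,y)=(v,f(y)),
\]
and $L_t$ has matrix $I_{2n}+t\bigl(\begin{smallmatrix}0&-I\\0&0\end{smallmatrix}\bigr)$, which is exactly the unipotent one-parameter family in Lemma~\ref{rowoperation} with $A=\bigl(\begin{smallmatrix}I&-I\\0&I\end{smallmatrix}\bigr)$. Since $P$ and $G$ each have the origin isolated in their zero fibers, Lemma~\ref{rowoperation} applies verbatim with $(f,g)$ replaced by $(P,G)$, giving $\deg_0^{\on{EKL}}F=\deg_0^{\on{EKL}}H$ directly. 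This sidesteps the need to argue separately that the local algebras $Q(F_t)$ assemble into a free $K[t]$-module: in the factored form the ideal $(P\circ L_t\circ G)$ is literally independent of $t$, exactly as in the paper's proof of the lemma. So your proof in fact uses a \emph{single} application of Lemma~\ref{rowoperation}, versus several in the paper's argument.
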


\begin{proof}
The idea is to use \Cref{rowoperation} to reduce to the case where $f$ and $g$ act on separate variables. To execute this idea, we first pad $f$ and $g$ with $n$ extra coordinates to obtain morphisms
\begin{align*}
    \widetilde{f},\widetilde{g}\colon \mathbb{A}^n \times \BA^n \to \mathbb{A}^n \times \BA^n
\end{align*}
where $\widetilde{f},\widetilde{g}$ send $(x,y)$ to $(f(x),y)$ and $(g(x),y)$ respectively. One readily verifies using Definition~\ref{thmdef-a1main} that the EKL-degree of a product of two morphisms $\BA^n \to \BA^n$ is the product of the EKL-degrees, so we have
\begin{align*}
    \deg_0^{\on{EKL}} (\widetilde{f}\circ \widetilde{g}) =  \deg_0^{\on{EKL}}(f\circ g).
\end{align*}
Therefore, it suffices to show that
\begin{equation} \label{eq-whatwewant}
    \deg_0^{\on{EKL}} (\widetilde{f}\circ \widetilde{g}) =(\deg_0^{\on{EKL}} f)\cdot ( \deg_0^{\on{EKL}} g).
\end{equation}
To this end, repeated applications of \Cref{rowoperation} imply that the following four compositions of maps all have the same $\on{EKL}$-degree ($\deg_0^{\on{EKL}}$) at the origin. Each map takes in a point $(x,y)\in \mathbb{A}^{n}\times \mathbb{A}^{n}$ and outputs a point in $\mathbb{A}^{n}\times \mathbb{A}^{n}$, and each line contains a composition of three maps read from left to right.
\begin{align*}
& (x,y)\mapsto (g(x),y)\quad  (x,y) \mapsto (x,y) \quad(x,y)\mapsto (f(x),y) \\
   &(x,y)\mapsto (g(x),y)\quad  (x,y) \mapsto (x,x+y) \quad (x,y) \mapsto (f(x),y) \\
  &(x,y)\mapsto (g(x),y)\quad  (x,y) \mapsto (x-y,x) \quad (x,y) \mapsto (f(x),y) \\
&(x,y)\mapsto (g(x),y)\quad  (x,y) \mapsto (-y,x) \quad (x,y) \mapsto (f(x),y).
\end{align*}
The first composition above is given by $ \widetilde{f}\circ \widetilde{g}$, while the last composition sends $(x,y)$ to $(-f(y),g(x))$. To finish, we apply \Cref{rowoperation} to the following four compositions of maps:
\begin{align*}
& (x,y)\mapsto (-f(y),g(x)) \quad (x,y)\mapsto (x,y)\\
& (x,y)\mapsto (-f(y),g(x)) \quad (x,y) \mapsto (x+y,y)\\
& (x,y)\mapsto (-f(y),g(x)) \quad (x,y) \mapsto (y,-x+y)\\
& (x,y)\mapsto (-f(y),g(x)) \quad (x,y) \mapsto (y,-x).
\end{align*}
The first composition sends $(x,y)$ to $(-f(y),g(x))$, while the last composition sends $(x,y)$ to $(g(x),f(y))$. Therefore, we have shown that $ \widetilde{f}\circ \widetilde{g}$ has the same $\on{EKL}$-degree degree at the origin as the completely decoupled map that sends $(x,y)$ to $(g(x),f(y))$, which can be thought of as a product of two morphisms $\BA^n \to \BA^n$. It follows that~\eqref{eq-whatwewant} holds.
\end{proof}

\bibliographystyle{alpha} \bibliography{references}
\end{document}